\documentclass{article}

\usepackage{amssymb}
\usepackage{amsthm}
\usepackage{amsmath}
\usepackage{array,hhline}

\newtheorem{lemma}{Lemma}[section]
\newtheorem{theorem}[lemma]{Theorem}
\newtheorem{proposition}[lemma]{Proposition}
\newtheorem{conjecture}[lemma]{Conjecture}
\newtheorem{corollary}[lemma]{Corollary}
\theoremstyle{definition}
\newtheorem{definition}[lemma]{Definition}
\newtheorem{remark}[lemma]{Remark}

\numberwithin{equation}{section}
\numberwithin{figure}{section}

\newcommand{\Xset}{\mathcal{X}}

\newcommand{\eg}{\emph{e.g.}}

\newcommand{\upperRomannumeral}[1]{\uppercase\expandafter{\romannumeral#1}}

\begin{document}

\author{\centerline {\large S. Toroyan} \\ {Yerevan State University, Yerevan, Armenia}}

\title{{\huge On a conjecture in bivariate interpolation}}

\maketitle

\begin{abstract}
\noindent The Gasca-Maeztu conjecture for the case
n=4 was proved for the first time in [J.~R.~Busch,
{A note on Lagrange interpolation in  $\mathbb{R}^2$},
Rev.\ Un.\ Mat.\ Argentina \ {\bf 36}  (1990)  33--38].
Here we bring a short and simple proof of it.
\end{abstract}

\section{Introduction\label{sec:intro}}
Denote by $\Pi_n$ the space of bivariate polynomials of total degree
not greater than $n$. We have
\begin{equation*}
N:=\dim \Pi_n=\binom{n+2}{2}.
\end{equation*}
We call a set $\Xset_s=\{ (x_1, y_1), (x_2, y_2), \dots , (x_s, y_s) \}$ of
distinct nodes $n$-poised, if for any data $\{c_1, \dots, c_s\}$ there exists
a unique polynomial $p\in \Pi_n$ satisfying the conditions
\begin{equation}\label{int cond}
p(x_i, y_i) = c_i, \ \ \quad i = 1, 2, \dots s .
\end{equation}

A necessary condition of
$n$-poisedness is: $s = N.$

A polynomial $p \in \Pi_n$ is called  an $n$-fundamental polynomial
for a node $ A = (x_k, y_k) \in \Xset_s$ if
\begin{equation*}
 p(x_k, y_k)=1 \quad \text{and} \quad p(x_i, y_i)=0, \quad \text{for all} \quad  1 \leq i \leq  s , \,  i \not= k.
\end{equation*}
 We denote the
$n$-fundamental polynomial of $A \in\Xset_s$ by $p_A^\star.$

We shall use the same letter, say $\ell$, to denote the
polynomial $\ell\in\Pi_1$ and the line with an equation
$\ell(x, y)=0.$

Now consider an $n$-poised set $\Xset = \Xset_{N} $. We say, that a node $A\in\Xset$
uses a line $\ell$, if $\ell$ is a factor of the fundamental
polynomial $p_A^\star$. We are going to use the following well known
\begin{proposition}[Bezout]\label{pointsells}
Suppose that  $\ell$ is a line and
$\Xset$ is a set of $n+1$ nodes lying in $\ell$.
Then for any polynomial $p \in \Pi_n$ vanishing on $\Xset$ we have
\begin{equation*}
p = \ell r  ,\quad \text{where} \quad r\in\Pi_{n-1}.
\end{equation*}
\end{proposition}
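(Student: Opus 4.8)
The plan is to reduce to the case in which $\ell$ is a coordinate axis by an affine change of variables, and then finish with an elementary univariate root count. Since $\ell\in\Pi_1$ is nonconstant, there is an invertible affine map of the plane taking the line $\ell=0$ onto the line $\{v=0\}$ in a new coordinate system $(u,v)$: writing $\ell(x,y)=ax+by+c$ with $(a,b)\neq(0,0)$, one sets $v:=ax+by+c$ and completes it to an affine coordinate system by choosing $u$ to be whichever of $x,y$ keeps the map invertible. Because affine substitutions preserve total degree, this transformation carries $\Pi_n$ to $\Pi_n$ and $\Pi_{n-1}$ to $\Pi_{n-1}$, sends $p$ to some $\tilde p\in\Pi_n$, and sends the $n+1$ nodes of $\Xset$ to $n+1$ distinct points lying on $\{v=0\}$. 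A factorization $\tilde p=v\,\tilde r$ in the new coordinates transforms back to $p=\ell r$ in the old ones, so it suffices to treat the normalized situation, which I do below, dropping the tildes.

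With $\ell=v$, I would decompose $p(u,v)=p(u,0)+v\,s(u,v)$. Here $g(u):=p(u,0)$ is a univariate polynomial of degree at most $n$, obtained by setting $v=0$, while every remaining monomial of $p$ carries a factor $v$; collecting these and dividing out one factor of $v$ yields $s\in\Pi_{n-1}$, since lowering the $v$-exponent by one drops the total degree by one.

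The nodes on $\{v=0\}$ have the form $(u_1,0),\dots,(u_{n+1},0)$, and the $u_i$ are pairwise distinct because distinct points on this line are distinguished by their $u$-coordinate. Since $p$ vanishes at each node, $g(u_i)=p(u_i,0)=0$ for $i=1,\dots,n+1$, so the polynomial $g$, of degree at most $n$, has $n+1$ distinct roots and must vanish identically. Hence $p(u,v)=v\,s(u,v)$, \ie\ $p=\ell r$ with $r:=s\in\Pi_{n-1}$, which is the claim.

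The argument has no genuinely hard step; the only points needing care are the degree bookkeeping — verifying that the affine change of variables preserves total degree and that the factor extracted in the decomposition really lands in $\Pi_{n-1}$ — together with the observation that the $n+1$ collinear nodes acquire pairwise distinct $u$-coordinates, which is what powers the root count.
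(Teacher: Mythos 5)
Your proof is correct and complete. There is, however, nothing in the paper to compare it against: the paper states this proposition as ``well known'' (it is the standard special case of Bezout's theorem for a line meeting an algebraic curve) and uses it as a black box, giving no proof at all. Your argument is the standard self-contained one: reduce by an invertible affine change of variables to the case $\ell=\{v=0\}$, split $p(u,v)=p(u,0)+v\,s(u,v)$ with $s\in\Pi_{n-1}$, and observe that the univariate part $g(u)=p(u,0)$, having degree at most $n$ and $n+1$ distinct roots, must vanish identically. You handle the points that are usually glossed over --- invertibility of the chosen coordinate change (picking $u=x$ or $u=y$ according to whether $b\neq 0$ or $a\neq 0$), preservation of total degree under affine substitution in both directions of the reduction, and the distinctness of the $u$-coordinates of the transformed nodes --- so the proof would serve as a complete replacement for the citation the paper relies on. The only trade-off is length: the paper's choice to cite the fact keeps the exposition short, while your version makes the prerequisite elementary and verifiable without outside references.
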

 It follows from
the Proposition~\ref{pointsells} that at most $n+1$ $n$-independent nodes can lye in a line.

We will make use of a special case of Cayley-Bacharach Theorem
(see \eg, \cite{E96},Th.~CB4, \cite{HJZ09a}, Prop.~4.1):
\begin{theorem}\label{CB}
Assume that the three lines  $\ell_1, \ell_2, \ell_3$ intersect another three lines
$\ell^{'}_1, \ell^{'}_2, \ell^{'}_3$ at  nine  different points. If a
polynomial $p\in \Pi_{3}$ vanishes at any eight  intersection
points, then it vanishes at all nine points.
\end{theorem}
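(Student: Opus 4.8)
The plan is to exploit the grid structure of the nine intersection points and to strip one line at a time off a suitable cubic, using Proposition~\ref{pointsells}. Write $A_{ij}$ for the intersection point $\ell_i\cap\ell'_j$, so that the nine points are $\{A_{ij}:1\le i,j\le 3\}$, with $A_{i1},A_{i2},A_{i3}$ lying on $\ell_i$ and $A_{1j},A_{2j},A_{3j}$ lying on $\ell'_j$. After relabelling the lines I may assume that $p$ vanishes at the eight points $A_{ij}$ with $(i,j)\neq(3,3)$, the goal being to prove $p(A_{33})=0$. I would first record that, since the nine points are distinct, no $A_{ij}$ lies on any line other than $\ell_i$ and $\ell'_j$: for instance $A_{2j}\in\ell_1$ would force $\ell_1,\ell_2,\ell'_j$ to be concurrent and hence $A_{1j}=A_{2j}$, a contradiction.

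The key step is to modify $p$ so that it becomes divisible by $\ell_1$. Set $Q'=\ell'_1\ell'_2\ell'_3\in\Pi_3$, a cubic vanishing at all nine points. Restrict both $p$ and $Q'$ to $\ell_1$, viewed as polynomials of degree at most $3$ in a parameter on that line. Since $\ell_1$ differs from each $\ell'_j$, the restriction $Q'|_{\ell_1}$ is a nonzero cubic whose three simple roots are exactly $A_{11},A_{12},A_{13}$. As $p|_{\ell_1}$ also vanishes at these three distinct points, and the space of univariate polynomials of degree at most $3$ vanishing at three prescribed points is one-dimensional, I get $p|_{\ell_1}=c\,Q'|_{\ell_1}$ for some scalar $c$. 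Hence $\tilde p:=p-c\,Q'$ vanishes identically on $\ell_1$, so by Proposition~\ref{pointsells} I may write $\tilde p=\ell_1 q$ with $q\in\Pi_2$. Note $\tilde p$ still vanishes at all eight points, and since $Q'(A_{33})=0$ it suffices to prove $\tilde p(A_{33})=0$.

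It remains to peel off the two other lines from $q$. Because $A_{21},A_{22},A_{23}$ lie off $\ell_1$, the factorization $\tilde p=\ell_1 q$ together with $\tilde p(A_{2j})=0$ forces $q(A_{2j})=0$ for $j=1,2,3$; these are three distinct points on $\ell_2$ while $q|_{\ell_2}$ has degree at most $2$, so $q|_{\ell_2}\equiv 0$ and $q=\ell_2 m$ with $m\in\Pi_1$. If $m\equiv 0$ then $\tilde p\equiv 0$ and we are done; otherwise the same argument at $A_{31},A_{32}$ (which lie off $\ell_1$ and $\ell_2$) shows that the line $m$ passes through the two distinct points $A_{31},A_{32}$. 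But $\ell_3$ is the unique line through these points, so $m$ is a scalar multiple of $\ell_3$ and therefore vanishes at $A_{33}$. Thus $\tilde p(A_{33})=\ell_1(A_{33})\,\ell_2(A_{33})\,m(A_{33})=0$, giving $p(A_{33})=0$.

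I expect the main obstacle to be the reduction in the second paragraph. A naive attempt to apply Proposition~\ref{pointsells} to $p$ directly fails, because each line carries only three of the eight known zeros of $p$, one short of the $n+1=4$ needed for divisibility. The device of subtracting the correct multiple of $Q'$ to acquire a fourth zero on $\ell_1$ — indeed a whole line of zeros — is what makes the Bezout reduction applicable, and checking that the two restricted cubics share exactly the same three roots, so that proportionality on $\ell_1$ holds, is the crux of the argument.
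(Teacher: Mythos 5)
Your proposal is correct, but there is nothing in the paper to compare it against: the paper never proves Theorem~\ref{CB}, it simply quotes it as a known special case of the Cayley--Bacharach theorem, citing \cite{E96} (Th.~CB4) and \cite{HJZ09a} (Prop.~4.1). What you have written is therefore a genuine addition rather than an alternative route. Your argument is the classical elementary proof of the ``eight implies nine'' statement, and every step checks out: the observation that distinctness of the nine points forces each $A_{ij}$ to lie on no line of the configuration other than $\ell_i$ and $\ell'_j$; the normalization $\tilde p = p - c\,Q'$ with $Q'=\ell'_1\ell'_2\ell'_3$, which is legitimate because a univariate polynomial of degree at most $3$ vanishing at three distinct points is determined up to a scalar, so $p|_{\ell_1}$ must be proportional to the nonzero cubic $Q'|_{\ell_1}$; and the successive factorizations $\tilde p=\ell_1 q$ and $q=\ell_2 m$ via Proposition~\ref{pointsells}, with the final identification of the zero set of $m$ (when $m\not\equiv 0$) with $\ell_3$, valid since $m$ vanishes at the two distinct points $A_{31},A_{32}$, hence cannot be a nonzero constant and must cut out the unique line through them. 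Compared with the sources the paper cites, which obtain the statement from more general machinery (Vandermonde matrices for intersection points of curves in \cite{HJZ09a}, general Cayley--Bacharach theory in \cite{E96}), your proof uses nothing beyond Proposition~\ref{pointsells} and one-variable arguments, so adopting it would make the paper self-contained at the cost of roughly a page.
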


\vspace{.5cm}
\section{The Gasca-Maeztu conjecture}
\vspace{.5cm}

Here we  consider a special type of $n$-poised sets.
\begin{definition}
We  call an  n-poised  set  $\Xset$ $GC_n$-set if  each node
$A\in\Xset$ has an $n$-fundamental polynomial which is a product of
$n$ linear factors.
\end{definition}
 Since the fundamental polynomial of an $n$-poised
 set is unique each of these lines passes through at least two nodes
 from $\Xset$, not belonging to the other lines
(see e.g. \cite{HJZ09b}, Lemma 2.5).

Next we bring the Gasca-Maeztu conjecture:
\begin{conjecture}[Gasca, Maeztu \cite{GM82}]
Any  $GC_n$-set contains $n+1$ collinear nodes.
\end{conjecture}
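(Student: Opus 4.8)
The plan is to reformulate the conclusion and then attack it locally. Call a line \emph{maximal} if it carries exactly $n+1$ nodes of $\Xset$, which by Proposition~\ref{pointsells} is the largest number a line may carry; the assertion ``$\Xset$ contains $n+1$ collinear nodes'' is exactly ``$\Xset$ has a maximal line''. I would fix an arbitrary node $A\in\Xset$ and study its $n$ used lines. By the observation following the definition of a $GC_n$-set, these $n$ lines are distinct, each carries at least two nodes lying on none of the others, and together they account for all $N-1$ remaining nodes. The goal is to prove that one of the $n$ lines belonging to $A$ must be maximal.

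I would first extract what elementary counting gives. Writing $m_1,\dots,m_n$ for the numbers of nodes on the $n$ lines of $A$, the union of the lines covers the $N-1=\tfrac12(n^2+3n)$ nodes distinct from $A$, so $\sum_i m_i\ge N-1$ after accounting for pairwise intersections. Combined with $m_i\le n+1$ this pins the $m_i$ into a narrow range and, for the concrete value $n=4$ treated in this paper, leaves only a short list of distributions of the $14$ off-node points among the four lines. In each distribution in which no line is maximal, the nodes are forced into a position where several triples of used lines meet several triples of used lines in nine node-points; Theorem~\ref{CB} then produces a ninth node whenever eight are present, and this rigidity is incompatible with the assumed distribution, so some line must carry the full $n+1=5$ nodes. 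Proposition~\ref{pointsells} finally turns that line into a linear factor shared by the fundamental polynomials of all off-line nodes, completing the case.

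For general $n$ I would try to run the same two-step scheme—narrow the distribution of nodes on the $n$ used lines of $A$ by counting, then destroy every non-maximal distribution by a Cayley--Bacharach rigidity—ideally feeding an induction on $n$: once a maximal line $\ell$ is found, deleting its $n+1$ nodes leaves $\binom{n+1}{2}=\dim\Pi_{n-1}$ nodes that form a $GC_{n-1}$-set, so lower-degree structure becomes available to constrain the configuration at level $n$.

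The main obstacle is that neither half of the scheme survives the passage to large $n$. Elementary counting never forces a maximal line on its own: already for $n\ge 3$ the bound $m_i\le n+1$ is loose enough that $n$ lines with at most $n$ nodes each can cover all $N-1$ points, so the existence of the extremal line is genuinely a geometric, not a combinatorial, fact. The geometric input that supplies it, Theorem~\ref{CB}, is a statement about cubics and $3\times 3$ arrays of lines, tailored precisely to the degree-$3$ polynomials $p_A^\star/\ell$ that appear when $n=4$. For higher $n$ one would need a degree-$n$ analogue of this Cayley--Bacharach rigidity together with a way to make it interact with the combinatorics of used lines; constructing and applying such a higher-degree obstruction uniformly in $n$ is exactly the step that has resisted proof, which is why a short, self-contained argument of the kind below is available only for $n=4$.
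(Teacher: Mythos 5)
There is a genuine gap, and it sits in the half of your plan that you present as settled rather than the half you flag as open. First, on scope: the target statement is the conjecture for all $n$, and your final paragraph concedes that your scheme dies for general $n$; that concession is consistent with the paper, which likewise proves only the $n=4$ case (Theorem~\ref{thmain}), so the fair comparison is with that proof. But even as a proof of the $n=4$ case your sketch does not work as stated. You claim that in each non-maximal distribution ``the nodes are forced into a position where several triples of used lines meet several triples of used lines in nine node-points'' and that Theorem~\ref{CB} then kills the distribution. The distributions $4+4+4+2$ and $4+4+3+3$ of \eqref{form1} describe the used lines of a \emph{single} node and place no nodes whatsoever on a $3\times 3$ grid of line intersections; in the paper the grid emerges only by comparing the used lines of \emph{two distinct} nodes $A,B$ that share a used $4$-node line in the $4+4+3+3$ configuration, after first showing (by repeated use of Proposition~\ref{pointsells}) that $A$ and $B$ share no other used line. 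Even then, Cayley--Bacharach yields only the multiplicity bound that such a line is used at most twice in that configuration (Lemma~\ref{lm4}) --- it never directly contradicts a distribution, and it is not the source of the final contradiction.

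What your plan is missing is the entire multiplicity apparatus and the actual endgame. The paper needs: Lemma~\ref{lm23} (a $2$- or $3$-node line is used by at most one node), Lemma~\ref{lm4} (a $4$-node line is used by at most three nodes, and three users share two further $4$-node lines), and Lemma~\ref{3node}, proved by a pigeonhole over the five lines $\ell,\ell_1,\dots,\ell_4$ determined by a node $A$ and the four nodes of a used $4$-node line $\ell$, showing every used $4$-node line has exactly three users. Together these force every fundamental polynomial into the $4+4+4+2$ form --- the opposite of your expectation, since it is the $4+4+3+3$ case that CB eliminates, not the other way around. The contradiction that finishes the proof is then not Cayley--Bacharach at all but a factorization argument: with $A,B,C$ the three users of $\ell$, the quartic $p=\ell_1\ell_2\ell_3\ell_4$ vanishes at all nodes except $B$ and $C$, hence is a linear combination of $p_B^\star$ and $p_C^\star$, both of which vanish on the three shared lines $\ell,\ell',\ell''$, which $p$ does not. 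Your sentence ``this rigidity is incompatible with the assumed distribution'' papers over all of this; none of the pigeonhole step, the exact-three-users count, or the final linear-combination argument is recoverable from counting plus one application of Theorem~\ref{CB}. (Your closing diagnosis is sound: for $n\ge 3$, $n$ lines with at most $n$ nodes each can cover all $N-1$ remaining nodes, so no counting argument alone produces a maximal line, and your remark that deleting a maximal line leaves a $GC_{n-1}$-set, while true, does not help produce one.)
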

So far the conjecture was proved for the values $n\le 5$ (see \cite{HJZ2}).
In the case $n=4$ this reduces to the following:

\begin{theorem}\label{thmain}
Any  $GC_4$-set $\Xset$ of 15 nodes contains  five collinear nodes.
\end{theorem}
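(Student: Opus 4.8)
The plan is to argue by contradiction: assume the $GC_4$-set $\Xset$ has no five collinear nodes, so by Proposition~\ref{pointsells} every line carries at most four nodes, and then produce a configuration contradicting the Cayley--Bacharach Theorem~\ref{CB}.

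First I record the local shape of each fundamental polynomial. For $A\in\Xset$ the polynomial $p_A^\star$ is a product of four distinct lines, each meeting at least two nodes and (by our assumption) at most four, whose union is exactly the $14$ nodes $\ne A$. A short count of how four lines of size $\le 4$ can cover $14$ points shows that the multiset of node-counts of the four factor lines is one of $(4,4,4,4)$, $(4,4,4,3)$, $(4,4,3,3)$, $(4,4,4,2)$; in particular every node uses at least two four-node lines, and the factor lines overlap in very few nodes.

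The heart of the argument concerns a fixed four-node line $\ell$ and the set $\Yset$ of the $11$ nodes off $\ell$. If $A\in\Yset$ uses $\ell$, then $q_A:=p_A^\star/\ell\in\Pi_3$ is a product of three lines and is a fundamental polynomial of $A$ inside $\Yset$ (it vanishes at the other ten nodes of $\Yset$ and not at $A$). Suppose two nodes $A,A'$ use $\ell$, and write $q_A=\ell_1\ell_2\ell_3$, $q_{A'}=m_1m_2m_3$. Each of the nine nodes of $\Yset\setminus\{A,A'\}$ lies on some $\ell_i$ and some $m_j$, hence is one of the points $\ell_i\cap m_j$. If any $\ell_i=m_j$, or two of the six lines coincided, or three were concurrent, the nine nodes would be forced onto fewer than nine points (or five onto one line), contradicting distinctness or the no-five-collinear hypothesis; so $\ell_1,\ell_2,\ell_3$ meet $m_1,m_2,m_3$ in nine distinct nodes, exactly the setting of Theorem~\ref{CB}. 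A third node $A''$ using $\ell$ would then be one of these nine grid nodes, and $q_{A''}\in\Pi_3$ vanishes at the other eight of them; Theorem~\ref{CB} forces $q_{A''}(A'')=0$, contradicting that $q_{A''}$ is a fundamental polynomial. Hence a four-node line is used by at most two nodes. The same intersection-counting, now with $12$ (resp.\ $13$) nodes off a three-node (resp.\ two-node) line, forces ten (resp.\ eleven) nodes to inject into nine intersection points, impossible already for two users; so three- and two-node lines are each used by at most one node.

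It remains to close with a global count. Writing $a,b,c,d$ for the numbers of nodes of the four types above, the number of (node, four-node line) incidences equals $4a+3b+2c+3d$, that of three-node lines $b+2c$, and that of two-node lines $d$. Combining the usage bounds just obtained with the fact that distinct lines share no node-pair — so the used lines contribute at least $3(4a+3b+2c+3d)+3(b+2c)+d$ pairs, all of which must fit among the $\binom{15}{2}=105$ pairs of nodes — yields $12a+12b+12c+10d\le 105$, i.e.\ $180-2d\le 105$ and $d\ge 38$, which is absurd since $a+b+c+d=15$. This contradiction proves the theorem. I expect the main obstacle to be the degenerate-incidence bookkeeping in the third paragraph: one must check that coincident, concurrent, or shared factor lines always drop the number of distinct intersection nodes below the threshold and are therefore excluded by the no-five-collinear assumption, so that the clean Cayley--Bacharach grid is genuinely forced in every case.
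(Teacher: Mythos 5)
Your overall strategy (assume no five collinear nodes, bound how many nodes can use each line via Cayley--Bacharach, then count) runs parallel to the paper's, but your key lemma is wrong. The claim that a $4$-node line $\ell$ can be used by at most two nodes fails exactly in the degenerate case you flagged as ``the main obstacle.'' If the two users $A,A'$ of $\ell$ share \emph{two} factor lines, say $q_A=g_1g_2h_A$ and $q_{A'}=g_1g_2h_{A'}$ with $g_1,g_2$ common, then the nine nodes of $\Yset\setminus\{A,A'\}$ need only lie on $g_1\cup g_2\cup(h_A\cap h_{A'})$, which accommodates up to $4+4+1=9$ of them --- exactly enough, so no contradiction arises, and Theorem~\ref{CB} cannot be invoked because the six lines do not meet in nine distinct points. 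This configuration is not exotic: it is precisely case~\eqref{form3} in the paper's Lemma~\ref{lm4}, which shows the correct bound is \emph{three} users (sharing two further $4$-node lines), and the paper's Lemma~\ref{3node} shows that any used $4$-node line is in fact used by exactly three nodes. So your bound of two is not merely unproven; it is false in the very configuration the proof must confront. (Your conclusions for $2$- and $3$-node lines do survive, since there even the two-shared-lines case gives at most $9$ spots for $10$, respectively $11$, nodes; this recovers the paper's Lemma~\ref{lm23}.)

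The error propagates fatally into your closing count. Replacing your bound by the correct one (at most three users per $4$-node line) turns your inequality into $8a+9b+10c+7d\le 105$, and with $a+b+c+d=15$ this reads $105+a+2b+3c\le 105$, i.e. $a=b=c=0$ and $d=15$. That is not absurd: it only says that every fundamental polynomial consists of three $4$-node lines and one $2$-node line, which is exactly the paper's intermediate conclusion that all fundamental polynomials have the form $4+4+4+2$ in \eqref{form1}. Eliminating this final configuration is the real content of the paper's proof and is entirely missing from yours: a pigeonhole argument (Lemma~\ref{3node} with Corollary~\ref{cor} and Remark~\ref{rem}) produces three nodes $A,B,C$ using $\ell$ and sharing two more $4$-node lines $\ell',\ell''$, forces the remaining $12$ nodes to be the $4\times 3$ grid of intersections of $\ell_1,\dots,\ell_4$ with $\ell,\ell',\ell''$, and then observes that $p=\ell_1\ell_2\ell_3\ell_4$, vanishing at every node except $B$ and $C$, would have to be a linear combination of $p_B^\star$ and $p_C^\star$ and hence vanish identically on $\ell$ --- the actual contradiction. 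Without a correct usage bound and without this final stage, your argument does not close.
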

To prove this, we shall assume from now on:
\begin{equation}\label{assumption}
\textit{The set $\Xset$ is a
$GC_4$-set which does not contain  five collinear nodes,}
\end{equation}
in order to derive a contradiction.

For each node $A \in \Xset$ the 4-fundamental polynomial is a
product of four linear factors. In view of assumption~\eqref{assumption}
 the $14$  nodes of $\Xset \setminus \{A\}$ are distributed
in the four lines used by $A$ in two possible ways: $4+4+4+2$ or
$4+4+3+3$. Accordingly, we can represent $p^\star_A$ in two forms:
\begin{equation}\label{form1}
p^\star_A = \ell_{=4} \ell^{'}_{=4} \ell^{''}_{=4} \ell_{\ge 2},
\quad \quad p^\star_A = \ell_{=4} \ell^{'}_{=4} \ell_{\ge 3}
\ell^{'}_{\ge 3}.
\end{equation}
The lines with ${=k}$ in the subscript
are called $k$-node lines and pass through exactly $k$ nodes.
 The lines with ${\ge k}$ in the subscript pass
through $k$ nodes and  possibly also through some other, already
counted nodes, which are the intersection points with the other
lines.

\vspace{.5cm}
\subsection{Lines used by  several points}
 \vspace{.5cm}
We start with two lemmas from \cite{GC4simple} (see Lemma 2.5 and
Lemma 2.6 there). For the sake of completeness we bring here the
proofs.
\begin{lemma}\label {lm23}
Any $2$ or $3$-node line can be used by at most one node of $\Xset.$
\end{lemma}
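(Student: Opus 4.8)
The plan is to argue by counting how many nodes a single $2$- or $3$-node line can force if it were used by two distinct nodes, and to show this exceeds what a $GC_4$-set permits. Suppose, for contradiction, that a line $\ell$ passing through exactly $k$ nodes ($k\in\{2,3\}$) is used by two distinct nodes $A,B\in\Xset$. Since $\ell$ divides both $p^\star_A$ and $p^\star_B$, and by the defining remark after the definition of $GC_n$-sets each line used by a node passes through at least two nodes lying on it but not on the other three lines of that node's fundamental polynomial, I would first pin down where $A$ and $B$ sit relative to $\ell$. Neither $A$ nor $B$ can lie on $\ell$: indeed $p^\star_A(A)=1\neq 0$ while $\ell$ vanishes on $\ell$, so $A\notin\ell$, and likewise $B\notin\ell$.

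The key mechanism is the following. Because $p^\star_A=\ell\, q$ with $q\in\Pi_3$, the line $\ell$ accounts for exactly those nodes of $\Xset\setminus\{A\}$ that lie on it, and the remaining three factors must vanish at all the other $14-(\text{nodes on }\ell)$ nodes. First I would use the representations in \eqref{form1}: since $\ell$ is a $2$- or $3$-node line, in $p^\star_A$ it plays the role of $\ell_{\ge 2}$ or $\ell_{\ge 3}$ (it cannot be a $\ell_{=4}$ factor), so the other three factors of $p^\star_A$ must be $4$-node lines (or, in the $4{+}4{+}3{+}3$ split, another $\ge 3$-node line). The same holds for $B$: the other three factors of $p^\star_B$ carry the bulk of the nodes. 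The crucial observation is that the nodes on $\ell$ itself are common to both $A$ and $B$'s configurations, and then the two nodes $A,B$ together with the structure forced by the remaining high-multiplicity lines overdetermine the set.

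Concretely, I would count incidences. Let $S_\ell=\Xset\cap\ell$, so $|S_\ell|=k\le 3$. For $A$, the three non-$\ell$ factors must vanish at all $14-k$ nodes of $\Xset\setminus(\{A\}\cup S_\ell)$; since each such factor is a line carrying at most $4$ nodes, and at least one of them carries $4$, I get a tight budget $3\cdot 4=12\ge 14-k$, forcing $k\ge 2$ with little slack, and in fact forcing a very rigid $4{+}4{+}4$ or $4{+}4{+}3$ distribution of the $14-k$ nodes among three lines not equal to $\ell$. Doing the same for $B$ and comparing, the hard part will be showing that the two resulting line-configurations for $A$ and $B$ cannot coexist: the three heavy lines of $A$ and the three heavy lines of $B$, together with $\ell$, would have to pass through the $15$ nodes in two incompatible ways, and either two of these lines coincide (creating a $5$-node line, contradicting \eqref{assumption}) or a node is forced onto $\ell$ beyond the count $k$, contradicting $|S_\ell|=k$.

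The main obstacle I anticipate is the case analysis needed to rule out the coexistence cleanly — particularly handling the $4{+}4{+}3{+}3$ representation, where the two $\ge 3$-node lines give more freedom, and ensuring that a shared $4$-node line between $A$'s and $B$'s configurations does not slip through. I expect the cleanest route is to fix the $4$-node lines used by $A$, observe that these four nodes must be distributed among $B$'s four lines as well, and then show that the only way to absorb them without creating a fifth collinear node or a second node on $\ell$ is impossible once $A,B\notin\ell$ are taken into account. If a direct counting contradiction proves elusive, I would fall back on Proposition~\ref{pointsells} (Bezout): a putative coincidence or over-incidence would place $n+1=5$ nodes on some line, immediately contradicting the standing assumption \eqref{assumption}.
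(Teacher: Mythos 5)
Your setup is correct ($A,B\notin\ell$; the three remaining factors of $p^\star_A$ must carry the $14-k$ nodes off $\ell$, forcing the shape $\ell\,\ell_{=4}\,\ell'_{=4}\,\ell_{\ge 3}$), but the proof stops exactly where it has to start: you never derive a contradiction, you only assert that $A$'s and $B$'s configurations ``cannot coexist'' and name two mechanisms, neither of which works. A coincidence between a heavy line of $A$ and a heavy line of $B$ does \emph{not} create a $5$-node line --- the coinciding lines are one and the same $4$-node line, and such sharing is perfectly legal in a $GC_4$-set (the proof of Lemma~\ref{lm4} exhibits three nodes sharing three $4$-node lines); likewise nothing in your budget count $3\cdot 4=12\ge 14-k$ forces an extra node onto $\ell$. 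Your fallback (``over-incidence would place $5$ nodes on some line'') is equally empty: assumption \eqref{assumption} already excludes $5$-node lines, and no step of your count manufactures one. So the entire contradiction, which is the content of the lemma, is missing.

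For comparison, the paper closes the argument algebraically: since the three heavy lines of $p^\star_A$ contain all of $\Xset\setminus(\{A\}\cup\ell)$, the node $B$ lies on one of them; writing $p^\star_B=\ell\, q$ with $q\in\Pi_3$, the polynomial $q$ vanishes at the $4$ nodes of $\ell_{=4}$, so Proposition~\ref{pointsells} gives $q=\ell_{=4}\,r$ with $r\in\Pi_2$, and then $r$ vanishes at the $\ge 3$ nodes, other than $B$, of the heavy line containing $B$, so by Proposition~\ref{pointsells} again $r$ vanishes on that whole line --- in particular $p^\star_B(B)=0$, contradicting $p^\star_B(B)=1$. Your coexistence intuition can in fact be salvaged, but by a different count than the ones you propose: the $13-k\ge 10$ nodes of $\Xset$ off $\ell\cup\{A,B\}$ each lie on one of $A$'s three heavy lines \emph{and} on one of $B$'s three heavy lines, hence each is an intersection point of such a pair; if no lines coincide there are at most $9$ such points, and each coincidence only tightens the count (a shared line absorbs at most $4$ nodes, leaving $\ge 6$ nodes for at most $4$ remaining pairs, and so on, noting that the line of $A$ through $B$ can never coincide with a line used by $B$). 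Either way, a nontrivial step --- Bezout divisibility or this intersection-point count --- is indispensable, and it is precisely this step your proposal leaves open.
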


\begin{proof}
Assume by way of contradiction that $\tilde{\ell}$ is a $2$ or $3$-node
line used by two points $A, B \in \Xset$. Consider the fundamental
polynomial $p^{\star}_A$ of $A$. It consists of the line
$\tilde{\ell}$ and three more lines, which contain the remaining
$\ge 11$ nodes of $\Xset \setminus (\tilde\ell\cup \{A\}),$
including $B$. Since there is no $5$-node line, we get
\begin{equation*}\label{form8}
p^\star_A = \tilde{\ell} \ell_{=4} \ell^{'}_{=4} \ell_{\ge 3}.
\end{equation*}
First suppose that $B$ belongs to one of the $4$-node lines, say to
$\ell^{'}_{=4}.$  We have that
\begin{equation*}
p^\star_B=\tilde{\ell}  q, \ \text{where}\ \  q \in \Pi_3.
\end{equation*} Notice that $q$
vanishes at $4$ nodes of $\ell_{=4}$, therefore according to
Proposition\ref{pointsells}
\begin{equation*}
p^\star_B=\tilde{\ell} \ell_{=4}  r, \ \text{where}\ \  r \in \Pi_2.
\end{equation*}
Now $r$ vanishes at $3$ nodes of $\ell^{'}_{=4}$ (i.e., except $B$).
Therefore again from Proposition~\ref{pointsells} we get that $r$ vanishes
at all points of $\ell^{'}_{=4}$ including $B.$ Hence $p^\star_B$
vanishes at $B,$ which is a contradiction.

Now assume that $B$ belongs to the line $\ell_{\ge 3}.$ Then $q$
vanishes at $4$ nodes of $\ell_{=4},$ $4\ (\ge 3)$ nodes of
$\ell^{'}_{=4}$  and at least $2$ nodes of $\ell_{\ge 3}.$ Therefore
we get from Proposition~\ref{pointsells} as above
\begin{equation*}
p_{B}^{\star}= \ell_{=4} \ell^{'}_{=4} \ell_{\geq 3} \ell \, ,\quad \text{where} \quad \ell \in \Pi_1 .
\end{equation*}
Hence again $p^\star_B$ vanishes at $B,$ which
is a contradiction.
\end{proof}
\begin{lemma}\label{lm4}
Any $4$-node line $\tilde{\ell}$ can be used by at most three nodes of $\Xset.$
 If three nodes use $\tilde{\ell}$  then they share two more $4$-node lines.

\end{lemma}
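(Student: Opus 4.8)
The plan is to attach to each node that uses $\tilde\ell$ a single cubic, and then to play two such cubics off against each other with Bezout's theorem and Cayley-Bacharach. If a node $A$ uses the $4$-node line $\tilde\ell$, then $A\notin\tilde\ell$ and $p^\star_A=\tilde\ell\,q_A$ with $q_A\in\Pi_3$; since $p^\star_A$ vanishes at the $14$ nodes other than $A$ and $\tilde\ell$ already annihilates its four nodes, $q_A$ vanishes at the ten nodes lying off $\tilde\ell$ other than $A$, while $q_A(A)\ne 0$. So I would assume three nodes $A,B,C$ use $\tilde\ell$ and first show that $q_A$ and $q_B$ must share a linear factor; the bound ``at most three'' will then drop out at the end.

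The heart of the argument is as follows. Both $q_A$ and $q_B$ vanish at the nine nodes lying off $\tilde\ell$ and distinct from $A,B$; call this set $\Sset$. If $q_A$ and $q_B$ had no common line, then by Bezout's theorem two such cubics meet in exactly nine points, so $\Sset$ is precisely their intersection, realised as the nine crossings of the three lines of $q_A$ with the three lines of $q_B$. Cayley-Bacharach (Theorem~\ref{CB}) then applies to $\Sset$: every cubic through eight of its points passes through the ninth. But $C\in\Sset$ and $q_C$ vanishes at the other eight points of $\Sset$ while $q_C(C)\ne0$ --- a contradiction. Hence $q_A$ and $q_B$ share a line $g$, which, being a factor of both $p^\star_A$ and $p^\star_B$, is used by the two nodes $A$ and $B$; by Lemma~\ref{lm23} it cannot be a $2$- or $3$-node line, so $g$ is a $4$-node line.

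Next I would upgrade ``one shared line'' to ``exactly two''. If $g$ were the only common factor, write $q_A=g\,c_A$ and $q_B=g\,c_B$ with $c_A,c_B$ products of two lines sharing no line. At most four nodes of $\Sset$ lie on $g$, so at least $9-4=5$ of them lie off $g$; these must lie on $c_A\cap c_B$, a set of at most four points --- a contradiction. (They cannot share all three lines either, since $q_A\ne q_B$.) Thus $q_A,q_B$ share exactly two $4$-node lines $g,h$, distinct from $\tilde\ell$ and from each other, and $p^\star_A=\tilde\ell\,g\,h\,r_A$. Three of the four factors are now $4$-node lines, so by the dichotomy in \eqref{form1} the pattern must be $4{+}4{+}4{+}2$ and $r_A$ is a $2$-node line, used by $A$ alone (Lemma~\ref{lm23}). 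Consequently the two lines shared by the pair $A,C$ (and by $B,C$) can only be $g$ and $h$ again, so all of $A,B,C$ use both $g$ and $h$: they share the two further $4$-node lines claimed in the statement, while $r_A,r_B,r_C$ turn out to be the three sides of the triangle $ABC$.

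Finally, for the bound ``at most three'', the $4{+}4{+}4{+}2$ pattern forces $\tilde\ell,g,h$ to be pairwise node-disjoint, hence to cover $12$ nodes and leave exactly the three nodes $A,B,C$. Any further node $D$ using $\tilde\ell$ would, by the argument applied to the triple $B,C,D$, be forced to use $g$ and $h$ as well, so $D$ would lie off all of $\tilde\ell,g,h$ and therefore coincide with one of $A,B,C$ --- impossible. I expect the main obstacle to be the Bezout/Cayley-Bacharach bookkeeping of the second paragraph: one must check that the nine nodes of $\Sset$ form a genuine transversal configuration of the two line-triples (no node sitting at a self-crossing of $q_A$ or $q_B$), so that Theorem~\ref{CB} is legitimately applicable, and likewise that the count $|c_A\cap c_B|\le 4$ is not spoiled by a hidden shared line.
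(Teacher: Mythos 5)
Your first two paragraphs are sound, and they are essentially the paper's own Cayley--Bacharach argument run contrapositively: the paper fixes the combinatorial form of $p^\star_A$ first and uses Theorem~\ref{CB} to show that in the $4{+}4{+}3{+}3$ case no third node can use $\tilde\ell$, whereas you assume a third node $C$ and conclude that $q_A,q_B$ must share a line; your conic count $|c_A\cap c_B|\le 4$ then correctly upgrades this to exactly two shared $4$-node lines $g,h$. (The transversality check you flag at the end is easy: nine distinct nodes sit among the at most nine pairwise intersections of the two line triples, so those intersections are distinct and are precisely the nodes.) Up to this point the proof is correct.

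The genuine gap is the sentence ``Three of the four factors are now $4$-node lines, so by the dichotomy in \eqref{form1} the pattern must be $4{+}4{+}4{+}2$ and $r_A$ is a $2$-node line.'' The dichotomy \eqref{form1} classifies the distribution of the $14$ \emph{new} nodes, not the total number of nodes per line: a factor written $\ell_{\ge 3}$ or $\ell_{\ge 2}$ may pass through an intersection point with another factor, so it can perfectly well be a $4$-node line. Hence three factors carrying exactly four nodes each is compatible with the $4{+}4{+}3{+}3$ pattern, and even inside the $4{+}4{+}4{+}2$ pattern the last factor need not be a $2$-node line. Consequently Lemma~\ref{lm23} does not exclude that $r_A$ is a $4$-node line used also by $C$, and everything downstream of this sentence --- that the pairs $(A,C)$ and $(B,C)$ must share $g$ and $h$ ``again'', the second assertion of the lemma, and your final ``at most three'' bound, which invokes the same step for the triple $B,C,D$ --- is unsupported. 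The hole is repairable, but by a counting argument you do not give: since $g,h$ divide both $q_A$ and $q_B$ and $r_A\neq r_B$, the $13$ nodes of $\Xset\setminus\{A,B\}$ lie in $\tilde\ell\cup g\cup h\cup(r_A\cap r_B)$, which can hold at most $4+4+4+1=13$ nodes; so $\tilde\ell,g,h$ are pairwise node-disjoint and there is exactly one node $P\in r_A\cap r_B$ off them. If $(A,C)$ shared $r_A$ instead of one of $g,h$, the same count applied to that pair would give a second decomposition of $\Xset$ of the same kind, and comparing the two decompositions forces $r_A$ to meet $g$ (or $h$) in two nodes, which is impossible for distinct lines. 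Only with such an argument (or with the paper's case split, where the $4{+}4{+}4{+}2$ form is a case hypothesis rather than something inferred from node counts) does the lemma follow.
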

\begin{proof}
Assume that $\tilde{\ell}=\tilde{\ell}_{=4}$ is a $4$-node line and
is used by two nodes $A, B \in \Xset$. Consider the fundamental
polynomial $p^\star_A$ of $A \in \Xset$. It consists of the line
$\tilde{\ell}$ and three more used lines containing the remaining
$10$ nodes  of $\Xset \setminus (\ell\cup \{A\}),$ including $B$.
Now there are two possibilities:
\begin{equation}\label{form3}
p^\star_A = \tilde{\ell} \ell_{=4} \ell^{'}_{=4} \ell_{\ge 2},
\end{equation}
\begin{equation}\label{form4}
p^\star_A = \tilde{\ell} \ell_{=4} \ell_{\ge 3} \ell^{'}_{\ge 3}.
\end{equation}

Consider first the case \eqref{form3}. We readily get, as in the
proof of Lemma \ref{lm23}, that $B$ does not belong to any of the
$4$-node lines. Hence $B$ must belong to the line $\ell_{\ge 2}.$
The same statement is true also for the third point $C$ that may use
$\tilde{\ell}.$ Clearly there is no room for the fourth node in
$\ell_{\geq{2}}.$ Thus Lemma is proved in this case. Notice that
in this case $p^\star_B$ ($p^\star_A$) uses the lines
$\tilde{\ell}, \ell_{=4}, \ell^{'}_{=4}$ and the line
 $\ell_{AC}$ ($\ell_{AB}$),
where $\ell_{AC}$ is the line passing through $A$ and $C$.
Therefore the nodes $A, B, C$ share three 4-node lines.

Now consider the case when $p^\star_A$ is given by \eqref{form4}. We
readily get as above that $B$ does not belong to any of the $3$-node
lines. Hence $B$ must belong to the line $\ell_{=4}.$ We have also
\begin{equation}\label{form7}
p^\star_B = \tilde{\ell} \alpha_{=4} \alpha_{\ge 3} \alpha^{'}_{\ge
3}.
\end{equation}
Moreover we have that the node $A$ in its turn belongs to the line
$\alpha_{=4}.$

Now let us verify that the nodes $A$ and $B$ do not share any line
except $\tilde{\ell}.$ Indeed, $\ell_{=4}$ and $\alpha_{=4}$ are the
only lines containing $B$ and $A,$ respectively. Thus they are not
among the possibly coinciding lines. Without loss of generality
assume by way of contradiction that $\ell^{'}_{\ge 3}\equiv \alpha^{'}_{\ge
3}.$

Thus we have
\begin{equation*}
p^\star_B=\tilde{\ell} \ell^{'}_{\ge 3} q, \ \text{where}\ \  q \in
\Pi_2.
\end{equation*}
Then, in view of \eqref{form4}, $q$ vanishes at $3$ nodes of
$\ell_{\ge 3}$ and at least at $2$ nodes of $\ell_{=4}$ (i.e., except
 $B$ and a possible secondary node of $\ell^{'}_{\ge 3}$). Therefore from
Proposition \ref{pointsells} we get that $q$ vanishes at all points of
the lines $\ell_{=4}$ and $\ell_{\ge 3}$ including $B.$ Hence
$p^\star_B$ vanishes at $B,$ which is a contradiction.

Thus the triples of the used lines $\ell_{=4}, \ell^{'}_{\ge 3},
\ell_{\ge 3}$ and $\alpha_{=4}, \alpha^{'}_{\ge 3}, \alpha_{\ge 3}$
intersect at exactly $9$ nodes of ${\mathcal I}:= \Xset\setminus
[\tilde\ell \cup \{A,B\}].$

If a third node $C$ is using $\tilde{\ell}$ then we have that $C\in
{\mathcal I}$ and $p_C^*$ must vanish at eight nodes of ${\mathcal I}$
but by the Theorem~\ref{CB} $p_C^*$ vanishes at $C$ also, which
is a contradiction. Therefore the 4-node line, in this case, can be
used at most twice.
\end{proof}

\begin{corollary} \label{cor} Suppose $\ell$ is a $4$-node line and three nodes $A,B,C\in\Xset\setminus \ell$ use a line
$\tilde{\ell},$ Then $l$ is among the three lines used by $A,B$ and
$C.$
\end{corollary}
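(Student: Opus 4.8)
The plan is to read this as a direct consequence of Lemma~\ref{lm4}. The corollary concerns three nodes $A,B,C\in\Xset\setminus\tilde\ell$ that all use a $4$-node line $\tilde\ell$, and asserts that a given separate $4$-node line $\ell$ must appear among the lines used by each of them. By Lemma~\ref{lm4}, since $\tilde\ell$ is a $4$-node line used by the three nodes $A,B,C$, those three nodes must share two \emph{more} $4$-node lines beyond $\tilde\ell$ itself. So $A$, $B$, and $C$ each use exactly the same triple of $4$-node lines: $\tilde\ell$ together with two other common $4$-node lines, call them $m$ and $m'$.

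First I would invoke Lemma~\ref{lm4} to fix this common triple $\{\tilde\ell, m, m'\}$ of $4$-node lines shared by $A,B,C$. The heart of the matter is then to show that the given line $\ell$ must coincide with one of $m$ or $m'$ (it cannot be $\tilde\ell$, since the hypothesis places $A,B,C$ outside $\tilde\ell$ whereas the nodes of $\ell$ are a different configuration). I would count nodes: the line $\ell$ passes through exactly four nodes of $\Xset$; the three common $4$-node lines $\tilde\ell, m, m'$ account for twelve node-slots, which together with $A,B,C$ exhaust all $15$ nodes (after accounting for the intersection/secondary nodes as in the ``$\ge k$'' bookkeeping of \eqref{form1}). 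The key step is to argue that any $4$-node line of $\Xset$ disjoint from $\{A,B,C\}$ must be one of the three lines $\tilde\ell,m,m'$, because there is no room for four nodes lying on a line other than these three.

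The main obstacle I expect is the precise bookkeeping of how the fifteen nodes distribute among $\tilde\ell$, $m$, $m'$, and $\{A,B,C\}$, and in particular ruling out that $\ell$ is a genuinely new $4$-node line sharing nodes with two or three of $\tilde\ell,m,m'$ only at their pairwise intersection points. To close this I would use Proposition~\ref{pointsells}: if $\ell$ were distinct from all of $\tilde\ell,m,m'$, then since each of these three lines already carries four nodes, the four nodes of $\ell$ would have to be drawn from the intersection points of the other lines and from $\{A,B,C\}$, and a short count shows this forces $\ell$ to contain too many nodes or to coincide with one of $m,m'$. Hence $\ell\in\{m,m'\}$, and since $m,m'$ are among the lines used by $A$, $B$, and $C$, the line $\ell$ is used by all three, which is exactly the assertion.
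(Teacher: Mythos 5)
Your core argument is the same as the paper's one\--line proof: by Lemma~\ref{lm4} the three nodes $A,B,C$ share three $4$-node lines $\tilde\ell,m,m'$; these carry exactly the twelve nodes of $\Xset\setminus\{A,B,C\}$; and any $4$-node line avoiding $A,B,C$ must then coincide with one of them, since a line distinct from all three would meet $\tilde\ell\cup m\cup m'$, and hence $\Xset\setminus\{A,B,C\}$, in at most three points (two distinct lines share at most one point --- nothing as strong as Proposition~\ref{pointsells} is needed, and note such a line would contain too \emph{few} nodes, not ``too many'' as you write). One hypothesis should also be made explicit: the statement does not assume $\tilde\ell$ is a $4$-node line, so before invoking Lemma~\ref{lm4} you must observe that a line used by three nodes is necessarily a $4$-node line, by Lemma~\ref{lm23} together with assumption~\eqref{assumption}.

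The genuine error is your parenthetical exclusion of the case $\ell=\tilde\ell$. The reason you give is empty: $A,B,C$ lie off $\ell$ by hypothesis and lie off $\tilde\ell$ automatically (a node never lies on a line used by its own fundamental polynomial, since that polynomial equals $1$ at the node), so the two lines are in completely symmetric position and nothing forces them to differ. Indeed, in the only application of this corollary (the proof of Lemma~\ref{3node}), Remark~\ref{rem} establishes that $\tilde\ell\equiv\ell$; so your strengthened conclusion $\ell\in\{m,m'\}$ is false precisely in the situation the corollary is designed for. Fortunately the error is inessential and easily excised: drop the exclusion, and your pigeonhole count gives $\ell\in\{\tilde\ell,m,m'\}$; since each of these three lines is used by all of $A$, $B$, $C$, the assertion follows.
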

Indeed, the $12$ nodes of $\Xset\setminus\{A,B,C\}$ lie in the three
used lines.

 With the following Lemma we strengthen the Lemma 2 in \cite{Bush}.
\begin{lemma}\label{3node}
If a node $A \in \Xset$ uses a $4$-node line ${\ell}$ then there are
two more nodes in $\Xset$ using it.
\end{lemma}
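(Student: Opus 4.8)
The plan is to show that if a node $A$ uses a $4$-node line $\ell$, then $A$ cannot be the only node using $\ell$, and moreover at least three nodes must use it. I would begin by supposing toward a contradiction that $\ell$ is used by exactly one node $A$, or by exactly two nodes. By Lemma~\ref{lm4}, a $4$-node line can be used by at most three nodes, so the only remaining possibilities to rule out are that it is used by exactly one or exactly two nodes; ruling these out forces the count to be exactly three, and Lemma~\ref{lm4} then supplies the fact that the three using nodes share two more $4$-node lines.

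First I would set up the representation of $p_A^\star$ containing $\ell$. Since $\Xset$ has no five collinear nodes, the remaining $14$ nodes split as in \eqref{form1}, so with $\ell = \tilde\ell_{=4}$ one of the two shapes in \eqref{form3} or \eqref{form4} holds. The key idea is a counting argument on the four nodes lying on $\ell$: each of these four nodes has its own $4$-fundamental polynomial, and I would track which lines those polynomials use. The natural quantity to exploit is that the four nodes on $\ell$ must themselves be separated by the fundamental polynomials of the other nodes; in particular, for any node $B \in \Xset \setminus \ell$, the polynomial $p_B^\star$ must vanish at all four nodes of $\ell$, which by Proposition~\ref{pointsells} (Bezout) forces $\ell \mid p_B^\star$ whenever $p_B^\star$ already vanishes at those four collinear points. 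This gives a mechanism: many nodes off $\ell$ are compelled to use $\ell$.

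The central step is a double-counting of incidences between the nodes off $\ell$ and the lines through the four nodes of $\ell$. The four nodes on $\ell$ have among them $4 \times 4 = 16$ line-slots in their fundamental polynomials, but $\ell$ itself is not used by any node lying on it. I would argue that each node $P$ on $\ell$ uses four lines other than $\ell$, and these lines must cover the other three nodes of $\ell$ as well as distribute the $11$ nodes off $\ell$; combining the constraints from Lemma~\ref{lm23} (a $2$- or $3$-node line is used at most once) and Lemma~\ref{lm4} (a $4$-node line at most three times, and then with two shared companion lines) pins down how many nodes off $\ell$ can use $\ell$. The contradiction should emerge by showing that if fewer than three nodes use $\ell$, then the fundamental polynomials of the four nodes on $\ell$ cannot simultaneously achieve the required vanishing pattern — there are too few available lines to both separate the four nodes on $\ell$ and to account for all $11$ off-line nodes without some line being overused in violation of Lemmas~\ref{lm23}–\ref{lm4}.

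The hard part will be making the incidence count tight enough to exclude the ``exactly two'' case rather than merely the ``exactly one'' case, since two using nodes already supply a fair amount of structure and one must rule out a delicately balanced configuration. I expect this to require invoking Corollary~\ref{cor} or the Cayley–Bacharach input of Theorem~\ref{CB}: once two nodes $A, B$ use $\ell$, their other used lines intersect in a controlled set of nine points, and a putative third using node would be forced by Theorem~\ref{CB} to lie in a position that either creates a fifth collinear node (contradicting~\eqref{assumption}) or forces the count up to three. Managing the case analysis over the two shapes in \eqref{form1} for each of the nodes involved, while keeping the line-usage bookkeeping consistent, is where the real care is needed.
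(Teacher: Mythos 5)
Your proposal has a genuine gap, and its one concrete mechanism is false. You claim that for any node $B \in \Xset \setminus \ell$, since $p_B^\star$ vanishes at the four nodes of $\ell$, Proposition~\ref{pointsells} forces $\ell \mid p_B^\star$, so that ``many nodes off $\ell$ are compelled to use $\ell$.'' But Proposition~\ref{pointsells} requires $n+1=5$ collinear nodes to conclude that a polynomial in $\Pi_4$ is divisible by the line; four are not enough. The restriction of $p_B^\star$ to $\ell$ is a univariate polynomial of degree at most $4$ with four roots, which need not vanish identically; indeed, under assumption~\eqref{assumption} no line contains five nodes, and by Lemma~\ref{lm4} at most three nodes can use $\ell$, so it is simply not true that every node off $\ell$ uses it. In the paper (and inside the proofs of Lemmas~\ref{lm23} and~\ref{lm4}) Bezout is applied only \emph{after} peeling off linear factors, so that the residual polynomial has degree strictly smaller than the number of its collinear zeros. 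With this mechanism gone, the engine of your incidence count disappears.

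Beyond that, the rest is a plan rather than a proof: the double count of the $4\times 4=16$ ``line-slots'' of the nodes lying on $\ell$ is never carried out, and you concede you do not know how to exclude the ``exactly two'' case. The idea that actually makes the lemma work is absent from your sketch, and it looks at the nodes \emph{off} $\ell$, not on it: let $\ell_1,\ell_2,\ell_3,\ell_4$ be the lines joining $A$ to the four nodes of $\ell$. For any node $P \in \Xset \setminus (\ell \cup \{A\})$, the four lines of $p_P^\star$ must cover the five nodes of $\ell \cup \{A\}$, so one of those lines passes through two of these five points and hence coincides with one of $\ell, \ell_1,\ell_2,\ell_3,\ell_4$. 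Thus all eleven nodes of $\Xset \setminus \ell$ use one of these five lines; by pigeonhole some line among the five is used by at least three nodes of $\Xset \setminus \ell$; Corollary~\ref{cor} then places $\ell$ among the three lines shared by that triple, i.e.\ all three of those nodes use $\ell$; finally Lemma~\ref{lm4} (at most three users of a $4$-node line) forces $A$ to be one of the triple, which yields exactly the two further users claimed. Your appeal to Theorem~\ref{CB} is not needed here --- Cayley--Bacharach enters only inside the proof of Lemma~\ref{lm4} --- and nothing resembling this covering-and-pigeonhole step appears in your proposal, so it does not constitute a proof.
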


\begin{proof}
Denote the four lines joining the node $A$ with the four nodes on
${\ell}$  by $\ell_1, \ell_2, \ell_3, \ell_4$. Consider any node
from $\Xset \setminus ( \ell  \cup \{A\}).$ The four lines
used by it pass through five nodes in $\ell\cup \{A\}$.
Therefore one of lines passes through $2$ nodes of the five and
therefore coincides with one of the lines $\ell_1, \ell_2, \ell_3,
 \ell_4$ or ${\ell}$.
Thus each of eleven nodes of $\Xset \setminus {\ell}$ uses one of
the lines ${\ell}, \ell_1, \ell_2, \ell_3, \ell_4$. Hence there are
at least three nodes in $ \Xset \setminus {\ell}$ using the
same line $\tilde\ell$ from the mentioned $5.$ In view of Corollary
\ref{cor} $\tilde{\ell}$ is used by the same triple of nodes. In view of
Lemma \ref{lm4} the node $A$ is among the three nodes.
\end{proof}

\begin{remark}\label{rem} Notice, that $\tilde\ell\equiv\ell.$ Indeed $\tilde\ell$ is
used by $A$ and therefore it cannot coincide with any $\ell_i,
i=1,2,3,4$, Also, each of the lines ${\ell}, \ell_1, \ell_2, \ell_3,
\ell_4$ is used by exactly $2$ nodes from $10$ of $\Xset \setminus
({\ell} \cup \{A\}).$ Therefore in view of Lemma \ref{lm4} each of the lines $\ell_1, \ell_2,
\ell_3, \ell_4$ is a $4$-node line.
\end{remark}

\subsection{Proof of the conjecture}

It follows from Lemmas \ref{lm4} and \ref{3node} that all the fundamental polynomials of
$\Xset$ have the form  $4+4+4+2,$ i.e., the first case of
 \eqref{form1}.

Suppose a node $A$ uses a  4-node line ${\ell}$ and the four lines
$\ell_1, \ell_2, \ell_3, \ell_4$ pass through $A$ and the four nodes
in ${\ell}$, as in the proof of Lemma \ref{3node}. The line
 ${\ell}$ is used by two more nodes $B, C \notin
\ell.$  (Lemma~\ref{3node}). The nodes $A,B,C$ share two
more $4$-node lines which we denote by $\ell^{'}$ and $\ell^{''}.$ Let us
verify that the nodes $B$ and $C$ do not lie in the lines $\ell_1,
\ell_2, \ell_3, \ell_4.$ Indeed, suppose conversely, say, the node $B$ is in
$\ell_1.$ Then, in view of Remark \ref{rem}, $C$ does not use any of
the $4$-node lines $\ell_1, \ell_2, \ell_3, \ell_4$ while from the
proof of Lemma \ref{lm4} we have that $C$ uses the line passing
through $A$ and $B$ and thus coinciding with $\ell_1.$

Therefore, $12$ nodes of $\Xset\setminus \{A,B,C\}$ belong to the
lines $\ell_1, \ell_2, \ell_3, \ell_4.$ From the other side these
$12$ nodes belong to the lines $\ell, \ell^{'}$ and $\ell^{''}.$

Now, we may conclude that $12$ nodes of $\Xset\setminus \{A,B,C\}$
are the intersection points of the $4$ lines $\ell_1, \ell_2,
\ell_3, \ell_4.$ with the $3$ lines $\ell, \ell^{'}$ and $\ell^{''}.$

Finally consider  the polynomial $p=\ell_1 \ell_2 \ell_3 \ell_4.$ As the
fourth degree polynomial $p$ vanishes at all the nodes but $B$ and
$C$, it should be a linear combination of the fundamental
polynomials of these two nodes. Both these fundamental polynomials
vanish on the lines $\ell, \ell^{'}, \ell^{''}$,
so this should be true also for p, which is a contradiction. The
Theorem~\ref{thmain} is proved.


\end{document}